\newtheorem{theorem}{Theorem}[section]
\newtheorem{definition}[theorem]{Definition}
\newtheorem{proposition}[theorem]{Proposition}
\newtheorem{problem}[theorem]{Problem}
\begin{document}

\title[Quantum groups]{Quantum groups under very strong axioms}

\author{Teo Banica}
\address{T.B.: Department of Mathematics, University of Cergy-Pontoise, F-95000 Cergy-Pontoise, France. {\tt teo.banica@gmail.com}}

\subjclass[2010]{46L65 (22E46)}
\keywords{Quantum isometry, Quantum reflection}

\begin{abstract}
We study the intermediate quantum groups $H_N\subset G\subset U_N^+$. The basic examples are $H_N,K_N,O_N,U_N,H_N^+,K_N^+,O_N^+,U_N^+$, which form a cube. Any other example $G$ sits inside the cube, and by using standard operations, namely intersection $\cap$ and generation $<\,,>$, can be projected on the faces and edges. We prove that under the strongest possible axioms, namely (1) easiness, (2) uniformity, and (3) geometric coherence of the various projection operations, the 8 basic solutions are the only ones.
\end{abstract}

\maketitle

\section*{Introduction}

The unitary group $U_N$ has a free analogue $U_N^+$, constructed by Wang in \cite{wa1}, and the study of the closed quantum subgroups $G\subset U_N^+$ is a problem of general interest. These subgroups fit into the general theory developed by Woronowicz in \cite{wo1}, \cite{wo2}, and can be studied by using representation theory methods. A generalization of the Peter-Weyl theory is available for them, an analogue of the Tannakian duality holds as well, and the Haar functional can be computed via a Weingarten integration formula. 

\bigskip

From the Tannakian perspective, the closed subgroups $G\subset U_N^+$ containing the usual symmetric group, $S_N\subset G$, are the ``simplest''. Indeed, at the level of the associated Tannakian categories we have a reverse inclusion, $C_G\subset C_{S_N}$, and when coupling this with the well-known fact that $C_{S_N}$ is a very elementary object, namely the span of the category of set-theoretic partitions $P$, we are led in this way into pure combinatorics.

Thus, at the core of the classification work for the compact quantum groups lies the question of classifying the intermediate subgroups $S_N\subset G\subset U_N^+$. The work so far here has focused on the ``easy'' case, where the Tannakian category of $G$ appears in the simplest possible way, namely $C_G=span(D)$, for a certain subcategory $D\subset P$. See \cite{bsp}.

\bigskip

From the point of view of the Drinfeld-Jimbo twisting \cite{dri}, \cite{jim}, the easy quantum group theory is a $q=1$ theory. Twisting it at $q\in\mathbb T$ is not possible in general, because Woronowicz's formalism, based on $C^*$-algebras, requires $q\in\mathbb R$. Thus, the reasonable problem is that of twisting the theory over $\mathbb T\cap\mathbb R=\{\pm1\}$, and so at $q=-1$.

In Tannakian terms, the $q=-1$ twisting requires introducing a signature function in the implementation of the partitions $\pi\in P$, as linear maps. Such a signature function exists indeed, but is defined only of the subcategory $P_{even}\subset P$ of partitions having even blocks. Now since we have $span(P_{even})=C_{H_N}$, with $H_N$ being the hyperoctahedral group, the conclusion is that the $q=-1$ twisting procedure requires $H_N\subset G$.

Summarizing, when taking into account both the easiness philosophy and the Drinfeld-Jimbo twisting philosophy, we are led to the assumption $H_N\subset G\subset U_N^+$.  See \cite{ba2}.

\bigskip

There are many examples of intermediate quantum groups $H_N\subset G\subset U_N^+$. Among them, there are 8 basic solutions, which form a cube, as follows:
$$\xymatrix@R=20pt@C=20pt{
&K_N^+\ar[rr]&&U_N^+\\
H_N^+\ar[rr]\ar[ur]&&O_N^+\ar[ur]\\
&K_N\ar[rr]\ar[uu]&&U_N\ar[uu]\\
H_N\ar[uu]\ar[ur]\ar[rr]&&O_N\ar[uu]\ar[ur]
}$$

Here on the lower face we have the orthogonal and unitary groups $O_N,U_N$, and the hyperoctahedral group $H_N=S_N\wr\mathbb Z_2$ and its complex version $K_N=S_N\wr\mathbb T$. As for the quantum groups on the upper face, these are liberations, from \cite{bb+}, \cite{bbc}, \cite{wa1}.

\bigskip

Quite remarkably, the above cube is an ``intersection and generation'' diagram. In order to explain this property, consider any of the 6 faces of the cube, which is as follows: 
$$\xymatrix@R=50pt@C=50pt{
Q\ar[r]&S\\
P\ar[r]\ar[u]&R\ar[u]}$$

The point is that, given any such square diagram, we can use the intersection operation $\cap$, and the generation operation $<\,,>$, and impose the following conditions:
$$P=Q\cap R\quad,\quad<Q,R>=S$$

Now back to our cube, one can prove that this is indeed an intersection and generation diagrams, as a consequence of the various results in \cite{bb+}, \cite{bbc}, \cite{bsp}.

\bigskip

With this observation in hand, let us go back to the $H_N\subset G\subset U_N^+$ problem. Any solution $G$ sits inside the cube, and by using the operations $\cap$ and $<\,,>$, we can ``project'' this quantum group on the various faces and edges of the cube. Under suitable assumptions, we end up in this way with a ``slicing'' of the cube, into 8 small cubes.

To be more precise, we can first associate to $G$ six quantum groups, as follows:
$$G_{class}=G\cap U_N\quad,\quad G_{free}=<G,H_N^+>$$
$$G_{disc}=G\cap K_N^+\quad,\quad G_{cont}=<G,O_N>$$
$$G_{real}=G\cap O_N^+\quad,\quad G_{unit}=<G,K_N>$$

We have then inclusions between $G$ and these quantum groups, as follows:
$$\xymatrix@R=2pt@C=2pt{
&&G_{free}&&\\
\\
&&&G_{unit}&\\
G_{disc}\ar[rr]&&G\ar[rr]\ar[uuu]\ar[ur]&&G_{cont}\\
&G_{real}\ar[ur]&&&\\
\\
&&G_{class}\ar[uuu]&&
}$$

This diagram can be fit inside the original cube, in the obvious way. Moreover, under suitable compatibility assumptions between the above operations, we can project on the edges as well, and we end up with a slicing of the original cube, into 8 small cubes.

Since we have a diagram formed by square subdiagrams, we can formulate:

\bigskip

\noindent {\bf Definition.} {\em We say that an intermediate subgroup $H_N\subset G\subset U_N^+$ has the slicing property if the cube slicing that it produces is an intersection and generation diagram.}

\bigskip

In order to complete our study, we will need one more concept. We recall that a family of quantum groups $G=(G_N)$ with $G_N\subset U_N^+$ is called uniform when the following conditions are satisfied, with respect to the standard embeddings $U_{N-1}^+\subset U_N^+$:
$$G_{N-1}=G_N\cap U_{N-1}^+$$

This is something very natural, algebrically speaking. At a more advanced level, this condition appeared in \cite{bsp}, in connection with the Bercovici-Pata bijection \cite{bpa}, and also in \cite{ba1}, \cite{bss}, in connection with various noncommutative geometry questions.

With these preliminaries in hand, we can now formulate:

\bigskip

\noindent {\bf Theorem.} {\em Assume that $H_N\subset G\subset U_N^+$ has the following properties:
\begin{enumerate}
\item Easiness.

\item Uniformity.

\item Slicing property.
\end{enumerate}
Then $G$ must be one of the basic $8$ quantum groups.}

\bigskip

This will be our main result. Of course, this is quite philosophical. We will explain as well on how to ``build'' on this result, by removing or modifying some of the axioms.

\bigskip

The paper is organized as follows: in 1 we recall the construction of the main cube, in 2 we explain in detail the above slicing procedure, in 3 we prove the above theorem, and in 4 we briefly discuss further classification results, along these lines.

\section{The cube}

We use Woronowicz's quantum group formalism from \cite{wo1}, \cite{wo2}, under the extra assumption $S^2=id$. To be more precise, the definition that we will need is:

\begin{definition}
Assume that $(A,u)$ is a pair consisting of a unital $C^*$-algebra $A$, and a unitary matrix $u\in M_N(A)$ whose coefficients generate $A$, such that the formulae
$$\Delta(u_{ij})=\sum_ku_{ik}\otimes u_{kj}\quad,\quad \varepsilon(u_{ij})=\delta_{ij}\quad,\quad S(u_{ij})=u_{ji}^*$$
define morphisms of $C^*$-algebras $\Delta:A\to A\otimes A$, $\varepsilon:A\to\mathbb C$, $S:A\to A^{opp}$. We write then $A=C(G)$, and call $G$ a compact matrix quantum group.
\end{definition}

The basic examples are the compact Lie groups, $G\subset U_N$. Indeed, given such a group we can set $A=C(G)$, and let $u_{ij}:G\to\mathbb C$ be the standard coordinates, $u_{ij}(g)=g_{ij}$. The axioms are then satisfied, with $\Delta,\varepsilon,S$ being the functional analytic transposes of the multiplication $m:G\times G\to G$, unit map $u:\{.\}\to G$, and inverse map $i:G\to G$.

Another class of examples is provided by the abstract duals $G=\widehat{\Gamma}$ of the finitely generated discrete groups $\Gamma=<g_1,\ldots,g_N>$. Indeed, we can set $A=C^*(\Gamma)$, and let $u=diag(g_1,\ldots,g_N)$ be the diagonal matrix formed by the generators. Once again the axioms are satisfied, and when $\Gamma$ is abelian we have an identification $A=C(G)$.

We are particularly interested here in the orthogonal group $O_N$, the unitary group $U_N$, the hyperoctahedral group $H_N=S_N\wr\mathbb Z_2$, and its complex version $K_N=S_N\wr\mathbb T$. These groups have free analogues, constructed in \cite{bb+}, \cite{bbc}, \cite{wa1}, as follows:

\begin{proposition}
We have compact quantum groups, whose associated algebras are constructed by starting with an abstract $N\times N$ matrix $u=(u_{ij})$, as follows,
\begin{enumerate}
\item $U_N^+$, obtained by imposing the conditions $u^*=u^{-1},u^t=\bar{u}^{-1}$,

\item $O_N^+\subset U_N^+$, obtained by further imposing the conditions $u=\bar{u}$,

\item $K_N^+\subset U_N^+$, obtained via the conditions $u_{ij}^*u_{ij}=u_{ij}u_{ij}^*=p_{ij}=$ magic,

\item $H_N^+$, obtained by imposing the conditions for both $O_N^+$ and $K_N^+$,
\end{enumerate}
with the magic condition stating that $p_{ij}$ are projections, with $\sum_ip_{ij}=\sum_jp_{ij}=1$.
\end{proposition}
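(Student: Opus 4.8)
The plan is to realise each of the four algebras as the universal $C^*$-algebra generated by the $N^2$ entries $u_{ij}$ of an abstract matrix $u=(u_{ij})$ subject to the stated relations, and then to verify the three requirements of Definition 1.1 by means of the universal property. The first thing to settle is the \emph{existence} of these universal algebras, which amounts to checking that the relations bound the generators. For $U_N^+$ and $O_N^+$ this is immediate from $\sum_k u_{ki}^*u_{ki}=1$, giving $\|u_{ij}\|\le1$; for $K_N^+$ and $H_N^+$ the $p_{ij}$ are projections, so $\|u_{ij}\|^2=\|u_{ij}^*u_{ij}\|=\|p_{ij}\|\le1$. In each case the relations therefore determine a finite $C^*$-seminorm and the universal completion exists. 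I would also record at this stage that $u$ is a genuine unitary of $M_N(A)$. For $U_N^+,O_N^+$ this is assumed, while for $K_N^+,H_N^+$ it must be derived: since projections summing to a projection are automatically pairwise orthogonal, the magic condition yields $p_{ik}p_{il}=0$ within each row and $p_{kj}p_{lj}=0$ within each column, and then, writing $u_{ij}=u_{ij}p_{ij}=p_{ij}u_{ij}$, the off-diagonal entries of $u^*u$ and $uu^*$ vanish while the diagonal ones equal $\sum_k p_{ki}=\sum_k p_{ik}=1$; likewise for $\bar u$.

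Second, I note that by universality it suffices, for each algebra, to check that the proposed images of the generators satisfy the defining relations in the target; the maps $\Delta,\varepsilon,S$ then extend uniquely, and the coassociativity, counit and antipode identities follow automatically, being already valid on the generators. The counit $\varepsilon(u_{ij})=\delta_{ij}$ is trivial, since the identity matrix is biunitary, self-adjoint, and has the magic matrix $1$ for its entries. For the antipode $S(u_{ij})=u_{ji}^*$, the candidate image is the matrix $\bar u^{\,t}$, and one checks that the anti-automorphism $u_{ij}\mapsto u_{ji}^*$ carries biunitarity to biunitarity, self-adjointness to self-adjointness, and the magic relations to magic relations, now inside $A^{opp}$; hence $S$ extends to a $*$-anti-homomorphism.

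Third, the heart of the matter is the comultiplication. Setting $U_{ij}=\sum_k u_{ik}\otimes u_{kj}$, I observe that the matrix $U=(U_{ij})$ is the product $(u\otimes1)(1\otimes u)$ in $M_N(A\otimes A)$. For $U_N^+$ and $O_N^+$ the required relations are then immediate: a product of biunitaries is biunitary, and self-adjointness of the entries is preserved because the two tensor legs commute. For $K_N^+$ and $H_N^+$ the computation is the genuine crux. Expanding
$$U_{ij}^*U_{ij}=\sum_{k,l}u_{ik}^*u_{il}\otimes u_{kj}^*u_{lj},$$
one must show that the off-diagonal terms vanish; using $u_{ik}^*=u_{ik}^*p_{ik}$, $u_{il}=p_{il}u_{il}$ together with $p_{ik}p_{il}=0$ for $k\ne l$ gives $u_{ik}^*u_{il}=0$, and the same argument applies to $U_{ij}U_{ij}^*$. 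Hence
$$U_{ij}^*U_{ij}=U_{ij}U_{ij}^*=\sum_k p_{ik}\otimes p_{kj},$$
which is exactly the coproduct of the magic matrix $p$, and is again magic by the well-known computation showing that magic unitaries are stable under the coproduct. Thus $\Delta$ extends in all four cases.

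I expect the reflection cases of the comultiplication to be the main obstacle: unlike the plain (bi)unitary cases, one cannot simply invoke that a product of unitaries is a unitary, and instead must exploit the partial-isometry structure $u_{ij}=u_{ij}p_{ij}=p_{ij}u_{ij}$ together with the row and column orthogonality of the range projections to kill the cross terms, thereby reducing to the already known magic-unitary case. Once the three maps are in place, the inclusions $O_N^+,K_N^+\subset U_N^+$ and $H_N^+\subset O_N^+,K_N^+$ are the evident surjections of universal algebras obtained by imposing additional relations, which are automatically morphisms of quantum groups.
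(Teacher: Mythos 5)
Your proof is correct and follows essentially the same route as the paper, whose proof is just a one-line sketch of exactly this argument: verify that $u^\Delta=(\sum_ku_{ik}\otimes u_{kj})$, $u^\varepsilon=(\delta_{ij})$, $u^S=(u_{ji}^*)$ satisfy the defining relations and invoke the universal property of $C(G)$. Your additional details --- the norm bounds guaranteeing existence of the universal $C^*$-algebras, the partial-isometry identities $u_{ij}=u_{ij}p_{ij}=p_{ij}u_{ij}$, and the cross-term cancellation giving $U_{ij}^*U_{ij}=U_{ij}U_{ij}^*=\sum_kp_{ik}\otimes p_{kj}$ in the magic case --- are precisely the standard computations the paper leaves implicit, and they are all accurate.
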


\begin{proof}
All this is standard, the idea in each case being that if $u=(u_{ij})$ satisfies the conditions, then so do the matrices $u^\Delta=(\sum_ku_{ik}\otimes u_{kj})$, $u^\varepsilon=(\delta_{ij})$, $u^S=(u_{ji}^*)$. Thus we can construct $\Delta,\varepsilon,S$ as in Definition 1.1, by using the universal property of $C(G)$.
\end{proof}

There are many things known about the above 4 quantum groups, in analogy with the known results about their classical counterparts. In the continuous case the passage $O_N\to U_N$ is best understood as a complexification at the Lie algebra level, and the free analogue of this fact states that $O_N^+\to U_N^+$ is a ``free complexification'', in a certain algebraic geometry sense. In the discrete case, the key identifications $H_N=S_N\wr\mathbb Z_2$ and $K_N=S_N\wr\mathbb T$ have free counterparts $H_N^+=S_N^+\wr_*\mathbb Z_2$ and $K_N^+=S_N^+\wr_*\mathbb T$, with $S_N^+$ being Wang's quantum permutation group \cite{wa2}, and with $\wr_*$ being Bichon's free wreath product operation \cite{bic}. For more on these topics, we refer to \cite{bb+}, \cite{bbc}.

In order to study these 4 quantum groups, and other quantum groups of the same type, we will need Woronowicz's Tannakian duality result from \cite{wo2}, in its ``soft'' form, worked out by Malacarne in \cite{mal}. The precise statement that we need is as follows:

\begin{proposition}
The closed subgroups $G\subset U_N^+$ are in correspondence with their Tannakian categories $C(k,l)=Hom(u^{\otimes k},u^{\otimes l})$, the correspondence being given by
$$C(G)=C(U_N^+)\Big/\left<T\in Hom(u^{\otimes k},u^{\otimes l})\Big|\forall k,l,\forall T\in C(k,l)\right>$$ 
where all the exponents are by definition colored integers, with the corresponding tensor powers being defined by $u^{\otimes\emptyset}=1,u^{\otimes\circ}=u,u^{\otimes\bullet}=\bar{u}$ and multiplicativity.
\end{proposition}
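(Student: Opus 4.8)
The plan is to set this up as an order-reversing Galois correspondence and to isolate the single hard ``reconstruction'' equality. First I would record the easy direction. For any closed $G\subset U_N^+$ the spaces $C_G(k,l)=Hom(u^{\otimes k},u^{\otimes l})$ form a Tannakian category, being stable under composition, tensor product and adjoints, and containing the identities together with the duality maps inherited from $U_N^+$. Each $T\in C_G(k,l)$ yields a relation $Tu^{\otimes k}=u^{\otimes l}T$ valid in $C(G)$, so the ideal generated by all such relations is contained in $\ker(C(U_N^+)\to C(G))$. This produces a canonical surjection $C(U_N^+)/\langle C_G\rangle\twoheadrightarrow C(G)$, which gives one half of the asserted formula.

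Next I would organize the two assignments $G\mapsto C_G$ and $D\mapsto G_D$, the latter defined by $C(G_D)=C(U_N^+)/\langle D\rangle$, into an order-reversing Galois connection: indeed $D\subset C_G$ holds exactly when every element of $D$ intertwines $u$ over $C(G)$, which is exactly the condition $G\subset G_D$. The formal inclusions $D\subset C_{G_D}$ and $G\subset G_{C_G}$ then hold tautologically, the latter being the easy surjection from the first step. The whole statement now reduces to the single reconstruction equality
$$\dim Hom_{G_D}(u^{\otimes k},u^{\otimes l})=\dim D(k,l),$$
for every Tannakian category $D$ and all colored integers $k,l$. Combined with $D\subset C_{G_D}$ this gives $C_{G_D}=D$, so that $G\mapsto C_G$ is onto; and applied to $D=C_G$ it shows that the nested subgroups $G\subset G_{C_G}$ have equal intertwiner spaces, whence the Peter-Weyl decomposition forces the surjection $C(G_{C_G})\to C(G)$ to be an isomorphism, i.e. $G_{C_G}=G$. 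This last identity is exactly the displayed formula, and it also yields injectivity of $G\mapsto C_G$.

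The real content is thus the reconstruction equality, asserting that imposing the relations of $D$ creates no new intertwiners, which I would prove via Peter-Weyl theory. First, exactly as in the proof of Proposition 1.2, one checks that the relations of $D$ are preserved by the candidate maps $\Delta,\varepsilon,S$, so that $A_D=C(U_N^+)/\langle D\rangle$ is again a compact matrix quantum group in the sense of Definition 1.1; it therefore possesses a Haar functional and a complete decomposition of each tensor power $u^{\otimes k}$ into irreducibles, with multiplicities linked to the intertwiner spaces by $\dim Hom(u^{\otimes k},u^{\otimes l})=\sum_\alpha m_\alpha(k)m_\alpha(l)$. The Tannaka-Krein theorem, in the soft form of Malacarne, then consists in realizing $A_D$ through its corepresentation on the colored tensor powers of $\mathbb C^N$ carrying the operators of $D$, and checking, by a Weingarten-type orthogonality computation, that the Haar-orthogonal projection onto $Hom_{G_D}(u^{\otimes k},u^{\otimes l})$ is the one already prescribed by $D(k,l)$, which is precisely the asserted dimension equality.

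I expect this reconstruction step to be the main obstacle: it is essentially Woronowicz's theorem, and the only reason it becomes manageable here is that we work inside the already-constructed $U_N^+$, so that the Haar functional and the Peter-Weyl decomposition may be bootstrapped from $U_N^+$ rather than manufactured from an abstract tensor category, this being the ``soft'' simplification we are invoking. A pervasive but routine piece of overhead, present at every stage, is the colored-integer bookkeeping: one must treat $u$ and $\bar u$ symmetrically, indexing all tensor powers by words in the symbols $\circ,\bullet$ with $u^{\otimes\circ}=u$ and $u^{\otimes\bullet}=\bar u$, and verify that all categorical operations respect this grading.
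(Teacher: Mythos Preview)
Your organization of the argument as an order-reversing Galois connection, together with the reduction to the single reconstruction equality $\dim C_{G_D}(k,l)=\dim D(k,l)$, is correct and matches the paper's (very brief) sketch: there is an obvious surjection, and all the content lies in injectivity.

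The divergence is in how you propose to establish the reconstruction equality. The paper, following Malacarne, names the \emph{bicommutant theorem} as the main tool: one represents $A_D$ on the tensor powers $(\mathbb C^N)^{\otimes k}$, observes that the image algebra has commutant $End_D(k)$, and then von Neumann's bicommutant theorem gives $End_{G_D}(k)=End_D(k)''=End_D(k)$, the last equality holding because $D$ is a $*$-category so $End_D(k)$ is already a von Neumann algebra. This is purely algebraic/operator-algebraic and does not touch the Haar functional.

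Your proposed route through the Haar functional and a ``Weingarten-type orthogonality computation'' has a circularity problem at exactly the crucial point. The Weingarten formula expresses Haar integrals of products of matrix coefficients via the Gram matrix of a basis of the \emph{actual} fixed-point space $C_{G_D}(\emptyset,k)$, not of $D(\emptyset,k)$. So to run a Weingarten computation for $G_D$ you must already know $C_{G_D}$, which is what you are trying to determine. Put differently: the Haar projection onto $Fix(u^{\otimes k})$ has rank $\dim C_{G_D}(\emptyset,k)$ by definition, and nothing in the Haar/Peter--Weyl machinery by itself forces this rank down to $\dim D(\emptyset,k)$. Some external input is needed to close the gap, and that input is precisely the bicommutant argument. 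Once $C_{G_D}=D$ is established that way, the Weingarten formula becomes available as a \emph{consequence}, not as a proof device.

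So: keep your Galois-connection framing, but replace the Haar/Weingarten step by the bicommutant argument on the tensor-power representations.
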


\begin{proof}
As already mentioned, this result is from \cite{mal}. The idea is that we have a surjective arrow from left to right, and the injectivity can be checked by doing some algebra, and then by applying the bicommutant theorem, as a main tool. See \cite{mal}.
\end{proof}

As a last ingredient, we will need the definition and basic properties of the $\cap$ and $<\,,>$ operations for the closed subgroups of $U_N^+$. We can proceed here as follows:

\begin{proposition}
The closed subgroups of $U_N^+$ are subject to $\cap$ and $<\,,>$ operations, constucted via the above Tannakian correspondence $G\to C_G$, as follows:
\begin{enumerate}
\item Intersection: defined via $C_{G\cap H}=<C_G,C_H>$.

\item Generation: defined via $C_{<G,H>}=C_G\cap C_H$.
\end{enumerate}
In the classical case, where $G,H\subset U_N$, we obtain in this way the usual notions.
\end{proposition}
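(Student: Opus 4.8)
The plan is to run everything through the Tannakian correspondence $G\to C_G$ of Proposition 1.3, using that it is inclusion-reversing: since $C_G$ consists of the intertwiners of the fundamental corepresentation, a larger group imposes more intertwining constraints, so $G\subset H$ forces $C_H\subset C_G$, and the converse holds by the injectivity part of Proposition 1.3. Under this order reversal the two operations become dual. For generation, I would first note that $C_G\cap C_H$ is stable under composition, tensoring and adjoints, hence is itself a Tannakian category; by Proposition 1.3 it is the category of a unique closed subgroup, which I take as the definition of $<G,H>$. The universal property is then immediate: a closed subgroup $K$ satisfies $G,H\subset K$ exactly when $C_K\subset C_G$ and $C_K\subset C_H$, i.e. $C_K\subset C_G\cap C_H$, so $<G,H>$ is the smallest such $K$. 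Symmetrically, the tensor category $<C_G,C_H>$ generated by $C_G\cup C_H$ is again Tannakian and defines $G\cap H$, which the same argument shows to be the largest closed subgroup contained in both.

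For the classical reduction I would use the concrete realization of Proposition 1.3 in the commutative case, namely that a classical $G\subset U_N$ is recovered from its category as
$$G=\big\{g\in U_N\ \big|\ g^{\otimes l}T=Tg^{\otimes k},\ \forall k,l,\ \forall T\in C_G(k,l)\big\}.$$
The elementary fact driving the intersection case is that, for a fixed $g$, the set of morphisms intertwined by $g$ is stable under composition, tensoring and adjoints, hence is a tensor category. Therefore $g$ intertwines $C_G\cup C_H$ if and only if it intertwines the generated category $<C_G,C_H>$. Feeding this into the concrete description applied to the quantum group attached to $<C_G,C_H>$, an element $g\in U_N$ belongs to it precisely when it intertwines both $C_G$ and $C_H$, that is, when $g\in G$ and $g\in H$; this is exactly the set-theoretic intersection.

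For generation the roles of elements and morphisms swap: for a fixed morphism $T$, the set of $g\in U_N$ intertwining $T$ is closed under products and inverses, hence is a subgroup. Consequently a morphism $T$ is intertwined by every element of the classically generated group if and only if it is intertwined by $G$ and by $H$ separately, which says exactly that $T\in C_G\cap C_H$. Thus the classically generated subgroup and the one defined via $C_G\cap C_H$ have the same Tannakian category, and the injectivity in Proposition 1.3 forces them to coincide.

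The step needing the most care is the well-definedness of the intersection operation, namely checking that $<C_G,C_H>$ is genuinely Tannakian: one must adjoin to the morphisms produced by the categorical operations their linear spans and norm-closures, and verify that the resulting spaces $C(k,l)$ stay closed under all operations. Granting the soft Tannakian duality of Proposition 1.3, this is a routine closure argument, and the real content of the proposition is carried by the two stability observations above --- intertwiners of a fixed element form a tensor category, intertwiners of a fixed morphism form a subgroup.
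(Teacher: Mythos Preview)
Your proposal is correct and follows the same route as the paper, namely defining the operations via the Tannakian correspondence of Proposition~1.3 and then verifying agreement with the usual notions when $G,H\subset U_N$. The paper's own proof is a two-line sketch (``clearly well-defined for the Tannakian categories'' and ``follows from definitions, via an elementary computation''), so your argument is in effect a careful unpacking of precisely what the authors leave implicit; the only cosmetic point is that when you say the set of $g\in U_N$ intertwining a fixed $T$ is a subgroup, you should also note it is \emph{closed}, since the classically generated group is the smallest closed subgroup containing $G$ and $H$.
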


\begin{proof}
Since the $\cap$ and $<\,,>$ operations are clearly well-defined for the Tannakian categories, the operations in (1,2) make sense indeed. As for the last assertion, this is something well-known, which follows from definitions, via an elementary computation.
\end{proof}

The above statement is of course something quite compact. It is possible to develop some more theory, with universality diagrams, other abstract aspects, and more examples as well. We refer here to \cite{bcf}, where these operations are heavily used.

With these ingredients in hand, we can go back now to the basic 4 groups and 4 quantum groups, and formulate a key result about them, as follows:

\begin{theorem}
The basic quantum unitary and quantum reflection groups, with the inclusions between them, form a cubic diagram, as follows:
$$\xymatrix@R=20pt@C=20pt{
&K_N^+\ar[rr]&&U_N^+\\
H_N^+\ar[rr]\ar[ur]&&O_N^+\ar[ur]\\
&K_N\ar[rr]\ar[uu]&&U_N\ar[uu]\\
H_N\ar[uu]\ar[ur]\ar[rr]&&O_N\ar[uu]\ar[ur]
}$$
Moreover, this is an intersection/generation diagram, in the sense that for any of its square subdiagrams $P\subset Q,R\subset S$ we have $P=Q\cap R$ and $<Q,R>=S$.
\end{theorem}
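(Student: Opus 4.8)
The plan is to translate the whole statement into combinatorics of partitions, via the Tannakian correspondence of Proposition 1.3, and then to verify the required identities face by face. I would begin by recording that each of the 8 quantum groups is easy, with $C_G=\mathrm{span}(D_G)$ for an explicit category of partitions $D_G\subset P$: the pairings $P_2,\mathcal P_2$ and the even partitions $P_{even},\mathcal P_{even}$ for the classical groups $O_N,U_N,H_N,K_N$, where the calligraphic font records the color-matching condition of the complex case, and the noncrossing analogues $NC_2,\mathcal{NC}_2,NC_{even},\mathcal{NC}_{even}$ for the free versions. These descriptions are exactly the liberation results of \cite{bb+}, \cite{bbc}, \cite{bsp}.

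With this in hand the cubic diagram itself is immediate. An inclusion $G\subset G'$ corresponds to the reverse inclusion $D_{G'}\subset D_G$, and the three directions of the cube are manifest set-theoretic inclusions of partition classes: pairings sit inside even partitions (the reflection-to-continuous direction), color-matching partitions sit inside all partitions (the real-to-complex direction), and noncrossing partitions sit inside all partitions (the classical-to-free direction). Transitivity and commutativity of these inclusions give the eight vertices and twelve edges.

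For the intersection/generation property I would use Proposition 1.4 to rewrite, on each square face $P\subset Q,R\subset S$, the two conditions entirely in terms of the $D_\bullet$. Since generation of Tannakian categories is compatible with spans, $\langle\mathrm{span}(D_Q),\mathrm{span}(D_R)\rangle=\mathrm{span}\langle D_Q,D_R\rangle$, the intersection condition $P=Q\cap R$ becomes the combinatorial identity $D_P=\langle D_Q,D_R\rangle$, where $\langle\,,\rangle$ now denotes the partition category generated under tensor product, composition and conjugation. Dually, since the partition maps $T_\pi$ are linearly independent for $N$ large, intersection of spans reduces to intersection of sets, so the generation condition $\langle Q,R\rangle=S$ becomes the set-theoretic identity $D_S=D_Q\cap D_R$.

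It remains to verify these two families of identities on the 6 faces. The generation side is routine: on the bottom face, for instance, one checks $\mathcal P_2=\mathcal P_{even}\cap P_2$, a partition that is simultaneously matching, even and a pairing being nothing but a matching pairing, and the other five faces are of the same elementary nature. The main obstacle is the intersection side, where one must show that the category generated by $D_Q$ and $D_R$ is all of $D_P$ and not merely a subcategory of it: the nontrivial inclusion requires manufacturing every partition of $D_P$ out of those of $D_Q$ and $D_R$ using only the category operations --- e.g. on the bottom face producing all of $P_{even}$ from $\mathcal P_{even}$ together with $P_2$. This is the genuine content of the theorem, and I would handle it by exhibiting on each face a small set of generators of $D_P$ together with the explicit tensor/composition/conjugation moves that build them; these are precisely the computations already present in \cite{bb+}, \cite{bbc}, \cite{bsp}, which is why the result can be deduced from those references.
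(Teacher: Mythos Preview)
Your proposal is correct and follows essentially the same route as the paper: reduce via Tannakian duality and Proposition~1.4 to the cube of the eight partition categories $P_{even},\mathcal P_{even},P_2,\mathcal P_2$ and their noncrossing versions, then check that this combinatorial cube is itself an intersection/generation diagram, invoking \cite{bb+}, \cite{bbc}, \cite{bsp} for the underlying easiness results. You are in fact more careful than the paper about which of the two directions carries the nontrivial content (the generation identities $D_P=\langle D_Q,D_R\rangle$) and about the linear-independence caveat needed to pass from intersections of spans to intersections of sets, whereas the paper simply declares both verifications ``elementary to check''.
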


\begin{proof}
The first assertion is clear from definitions. In order to prove now the second assertion, we must compute the Tannakian categories of our 8 quantum groups. 

For this purpose, we use the easy quantum group philosophy. Let us recall indeed from \cite{bsp} that associated to any partition $\pi\in P(k,l)$ between an upper row of $k$ points and a lower row of $l$ points is the following linear map, between tensor powers of $\mathbb C^N$:
$$T_\pi(e_{i_1}\otimes\ldots\otimes e_{i_k})=\sum_{j_1,\ldots,j_l}\delta_\pi\begin{pmatrix}i_1&\ldots&i_k\\ j_1&\ldots&j_l\end{pmatrix}e_{j_1}\otimes\ldots\otimes e_{j_l}$$

Here $\delta_\pi\in\{0,1\}$ is a Kronecker type symbol, whose value depends on whether the indices fit or not, when put in the obvious way on the legs of the partition. See \cite{bsp}.

With this construction in hand, the result regarding our 8 quantum groups, which is something well-known, from Brauer's paper \cite{bra}, and then from a number of extensions, including \cite{bb+}, \cite{bbc}, \cite{bsp}, is that we obtain categories of the following type: 
$$C(k,l)=span\left(T_\pi\Big|\pi\in D\right)$$

To be more precise, let $P_{even}$ be the set of partitions all whose blocks have even size, let $P_2\subset P_{even}$ be the set of pairings, let $\mathcal P_{even}\subset P_{even}$ be the set of partitions which are ``matching'', in the sense that we have $\#\circ=\#\bullet$ in each block, when counting the upper legs with sign $-$ and the lower legs with sign $+$, and finally let $NC_{even}\subset P_{even}$ be the set of partitions which are noncrossing. We have so far 4 sets of partitions, and by further intersecting these sets we obtain 4 more sets, denoted $P_2,NC_2,\mathcal{NC}_{even},\mathcal{NC}_2$.

Observe that these $4+4$ sets of partitions are ``categories of partitions'' in the sense of \cite{bsp}, in the sense that they are stable under the vertical and the horizontal concatenation of the partitions, and under the upside-down turning operation.

With these conventions, the above-mentioned result states that the quantum groups in the statement appear respectively from the following categories of partitions:
$$\xymatrix@R=20pt@C8pt{
&\mathcal{NC}_{even}\ar[dl]\ar[dd]&&\mathcal {NC}_2\ar[dl]\ar[ll]\ar[dd]\\
NC_{even}\ar[dd]&&NC_2\ar[dd]\ar[ll]\\
&\mathcal P_{even}\ar[dl]&&\mathcal P_2\ar[dl]\ar[ll]\\
P_{even}&&P_2\ar[ll]
}$$

Now observe that this diagram of categories of partitions is both an intersection and generation diagram. Indeed, both these properties are elementary to check.

When getting back now to the quantum groups, via the Tannakian duality operation, $D\to G_D$, it follows from Proposition 1.4 that we have the following formulae:
$$G_{D\cap E}=<G_D,G_E>\quad,\quad G_{<D,E>}=G_D\cap G_E$$

Thus, our intersection and generation diagram of categories of partitions gets transformed into an intersection and generation diagram of quantum groups, as stated.
\end{proof}

We will heavily use in what follows the above result, as well as the various technical ingredients developed in its proof. In addition to what has been said there, let us mention that the quantum groups whose Tannakian categories are of the form $C=span(D)$, as in the above proof, are called ``easy''. For more on easiness, we refer to \cite{bsp}, \cite{csn}.

\section{Cube slicing}

We are interested in the classification of the intermediate subgroups $H_N\subset G\subset U_N^+$. Such a quantum group can be imagined as sitting inside the cube, and the point is that by using the operations $\cap$ and $<\,,>$, we can ``project'' it on the faces and edges.

In order to clarify this construction, let us start with the following definition:

\begin{definition}
Associated to any quantum group $H_N\subset G\subset U_N^+$ are:
\begin{enumerate}
\item Its classical version, $G_{class}=G\cap U_N$.

\item Its free version, $G_{free}=<G,H_N^+>$.

\item Its discrete version, $G_{disc}=G\cap K_N^+$.

\item Its continuous version, $G_{cont}=<G,O_N>$.

\item Its real version, $G_{real}=G\cap O_N^+$.

\item Its unitary version, $G_{unit}=<G,K_N>$.
\end{enumerate}
\end{definition}

Here we have chosen the word ``unitary'' instead of the word ``complex'' in order for the corresponding abbreviation ``unit'' not to be confused with ``cont''.

In relation with our cube, we can now formulate:

\begin{proposition}
Given an intermediate quantum group $H_N\subset G\subset U_N^+$, we have a diagram of closed subgroups of $U_N^+$, obtained by inserting
$$\xymatrix@R=2pt@C=2pt{
\\
&&G_{free}&&\\
\\
&&&G_{unit}&\\
G_{disc}\ar[rr]&&G\ar[rr]\ar[uuu]\ar[ur]&&G_{cont}\\
&G_{real}\ar[ur]&&&\\
\\
&&G_{class}\ar[uuu]&&}
\qquad\xymatrix@R=10pt@C=30pt{\\ \\ \\ \\ \ar@.[r]&}\qquad
\xymatrix@R=20pt@C=20pt{
&K_N^+\ar[rr]&&U_N^+\\
H_N^+\ar[rr]\ar[ur]&&O_N^+\ar[ur]\\
&K_N\ar[rr]\ar[uu]&&U_N\ar[uu]\\
H_N\ar[uu]\ar[ur]\ar[rr]&&O_N\ar[uu]\ar[ur]
}$$
in the obvious way, with each $G_x$ belonging to the main diagonal of each face.
\end{proposition}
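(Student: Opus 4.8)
The plan is to verify, one operation at a time, that each of the six quantum groups $G_x$ lies between the two extreme corners of the corresponding face of the cube, and then to read off the inclusions of the inserted diagram directly from the definitions of $\cap$ and $<\,,>$. The only structural input needed is that both operations are monotone with respect to inclusions of closed subgroups, which I would establish first.

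First I would record the monotonicity, working through the Tannakian correspondence $G\mapsto C_G$ of Proposition 1.4, under which an inclusion $G\subset H$ reverses to $C_H\subset C_G$. For intersection we have $C_{G\cap K}=<C_G,C_K>$, so if $C_G,C_K\subset C_L$ for some category $C_L$, then the generated category $<C_G,C_K>$ still sits inside $C_L$, giving $L\subset G\cap K$ whenever $L\subset G$ and $L\subset K$. Dually, $C_{<G,K>}=C_G\cap C_K$, so $G,K\subset L$ forces $<G,K>\subset L$. These are the two monotonicity statements that I will use repeatedly.

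Next I would treat the three intersections. For $G_{class}=G\cap U_N$ the upper bound $G_{class}\subset U_N$ is immediate, while the lower bound $H_N\subset G_{class}$ follows from $H_N\subset G$ (hypothesis) together with $H_N\subset U_N$ (a cube inclusion), via the monotonicity above; this places $G_{class}$ on the diagonal from $H_N$ to $U_N$ of the classical face. The cases $G_{disc}=G\cap K_N^+$ and $G_{real}=G\cap O_N^+$ are identical, using $H_N\subset K_N\subset K_N^+$ and $H_N\subset O_N\subset O_N^+$ respectively for the lower bounds. Symmetrically, for the three generations I would apply the dual monotonicity: $G_{free}=<G,H_N^+>$ satisfies $H_N^+\subset G_{free}$ trivially and $G_{free}\subset U_N^+$ because $G\subset U_N^+$ and $H_N^+\subset U_N^+$, and likewise $G_{cont}=<G,O_N>$ and $G_{unit}=<G,K_N>$ land on the continuous face (from $O_N$ to $U_N^+$) and on the unitary face (from $K_N$ to $U_N^+$).

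Finally, the six inclusions of the inserted diagram are immediate from the definitions: the three intersections $G_{class},G_{disc},G_{real}$ are contained in $G$ by the defining property of $\cap$, and $G$ is contained in the three generations $G_{free},G_{cont},G_{unit}$ by the defining property of $<\,,>$. Matching the corners computed above then shows that the inserted diagram embeds into the cube with each $G_x$ on the main diagonal of its face, as claimed. I do not expect any genuine obstacle here; the only point requiring care is the bookkeeping that locates $H_N$ inside $K_N^+$ and $O_N^+$ (hence inside the corresponding intersections), and dually that the three generated versions remain inside $U_N^+$.
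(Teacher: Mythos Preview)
Your proposal is correct and follows essentially the same route as the paper: both arguments reduce the statement to checking the twelve inclusions $P\subset G_x\subset S$ (one pair for each face), deducing them directly from the definitions of $\cap$ and $<\,,>$ together with the hypothesis $H_N\subset G\subset U_N^+$. The only difference is expository: you spell out the universal/monotonicity properties of $\cap$ and $<\,,>$ via the Tannakian correspondence, whereas the paper simply declares these trivial from the definitions.
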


\begin{proof}
The fact that we have indeed the diagram of inclusions on the left is clear from Definition 2.1 above. Regarding now the insertion procedure, consider any of the faces of the cube, $P\subset Q,R\subset S$. Our claim is that the corresponding quantum group $G_x$ can be inserted on the corresponding main diagonal $P\subset S$, as follows:
$$\xymatrix@R=20pt@C=20pt{
Q\ar[rr]&&S\\
&G\ar[ur]\\
P\ar[rr]\ar[uu]\ar[ur]&&R\ar[uu]}$$

Thus, in order to finish, we have to check a total of $6\times 2=12$ inclusions. But, according to Definition 2.1 above, these inclusions to be checked are as follows:

(1) $H_N\subset G_{class}\subset U_N$, where $G_{class}=G\cap U_N$.

(2) $H_N^+\subset G_{free}\subset U_N^+$, where $G_{free}=<G,H_N^+>$.

(3) $H_N\subset G_{disc}\subset K_N^+$, where $G_{disc}=G\cap K_N^+$.

(4) $O_N\subset G_{cont}\subset U_N^+$, where $G_{cont}=<G,O_N>$.

(5) $H_N\subset G_{real}\subset O_N^+$, where $G_{real}=G\cap O_N^+$.

(6) $K_N\subset G_{unit}\subset U_N^+$, where $G_{unit}=<G,K_N>$.

All these statements being trivial from the definition of $\cap$ and $<\,,>$, and from our assumption $H_N\subset G\subset U_N^+$, our insertion procedure works indeed, and we are done.
\end{proof}

In order now to complete the diagram, we have to project as well $G$ on the edges of the cube. For this purpose, we can assume that $G$ lies on one of the six faces.

The general result that we will need is as follows:

\begin{proposition}
Given an intersection and generation diagram $P\subset Q,R\subset S$ and an intermediate quantum group $P\subset G\subset S$, we have a diagram as follows:
$$\xymatrix@R=30pt@C=30pt{
Q\ar[r]&<G,Q>\ar[r]&S\\
G\cap Q\ar[u]\ar[r]&G\ar[r]\ar[u]&<G,R>\ar[u]\\
P\ar[r]\ar[u]&G\cap R\ar[u]\ar[r]&R\ar[u]}$$
In addition, $G$ slices the square, in the sense that this is an intersection and generation diagram, precisely when $G=<G\cap Q,G\cap R>$ and $G=<G,Q>\cap<G,R>$.
\end{proposition}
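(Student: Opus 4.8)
The plan is to prove the two assertions in turn; in both, the real content is a careful bookkeeping of the elementary properties of $\cap$ and $<\,,>$, namely that $A\cap B$ is the largest object inside both $A$ and $B$, that $<A,B>$ is the smallest object containing both, and that both operations are associative, commutative and idempotent.

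For the first assertion I would check the twelve inclusions read off from the arrows. The tautological ones express that $G\cap Q,G\cap R$ lie inside their factors and that $G$ lies inside $<G,Q>,<G,R>$. The remaining ones use the hypotheses on the frame: from $P=Q\cap R$ we get $P\subset Q,R$, hence with $P\subset G$ also $P\subset G\cap Q$ and $P\subset G\cap R$; and from $S=<Q,R>$ we get $Q,R\subset S$, hence with $G\subset S$ also $<G,Q>\subset S$ and $<G,R>\subset S$. This settles the diagram.

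For the second assertion, the $3\times 3$ grid is an intersection and generation diagram iff every one of its square subdiagrams satisfies the two conditions, and I would organise the check around the four minimal $2\times 2$ cells. The two ``diagonal'' cells, the top-left cell $G\cap Q\subset Q,\,G\subset<G,Q>$ and the bottom-right cell $G\cap R\subset G,\,R\subset<G,R>$, are automatically intersection and generation diagrams, since in each the two conditions are outright tautologies. For the bottom-left cell $P\subset G\cap Q,\,G\cap R\subset G$ the intersection condition is automatic, as $(G\cap Q)\cap(G\cap R)=G\cap(Q\cap R)=G\cap P=P$ using $P=Q\cap R$ and $P\subset G$, so only the generation condition $<G\cap Q,G\cap R>=G$ has content. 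Dually, for the top-right cell $G\subset<G,Q>,\,<G,R>\subset S$ the generation condition is automatic, as $<<G,Q>,<G,R>>=<G,Q,R>=<G,S>=S$ using associativity and idempotency together with $S=<Q,R>$ and $G\subset S$, so only the intersection condition $<G,Q>\cap<G,R>=G$ has content.

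Finally I would observe that the remaining, larger square subdiagrams of the grid carry no new conditions: each of them reduces, by the same kind of manipulation, to the two conditions already isolated (for instance $<G\cap Q,R>=<G,R>$ follows from $<G\cap Q,G\cap R>=G$, and $Q\cap<G,R>=G\cap Q$ follows from $<G,Q>\cap<G,R>=G$), while the outer frame is the given diagram. Hence the whole $3\times 3$ diagram is an intersection and generation diagram precisely when $G=<G\cap Q,G\cap R>$ and $G=<G,Q>\cap<G,R>$, as claimed. The only step that genuinely uses the hypothesis that the frame is already an intersection and generation diagram is the reduction $<<G,Q>,<G,R>>=<G,S>$ and its dual for the intersection; everything else is formal. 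Accordingly I expect no serious obstacle, the one point requiring care being the clean separation of the tautological conditions from the two that carry the actual content.
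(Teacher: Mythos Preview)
Your argument is correct and follows exactly the paper's approach: the paper's proof simply notes that the conditions for the upper-left and lower-right small squares are automatic, and that one of the two conditions for each of the lower-left and upper-right squares is automatic as well, leaving precisely the two conditions $G=<G\cap Q,G\cap R>$ and $G=<G,Q>\cap<G,R>$. Your write-up spells this out in more detail (and additionally handles the larger subdiagrams, which the paper does not explicitly address), but the substance is the same.
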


\begin{proof}
This is indeed clear from definitions, because the intersection and generation diagram conditions are automatic for the upper left and lower right squares, as well as half of the generation diagram conditions for the lower left and upper right squares.
\end{proof}

Let us record as well the Tannakian version of this construction, as follows:

\begin{proposition}
Given an intersection and generation diagram $P\subset Q,R\subset S$ and an intermediate quantum group $P\subset G\subset S$, we have a diagram as follows:
$$\xymatrix@R=30pt@C=30pt{
C_Q\ar[d]&C_G\cap C_Q\ar[l]\ar[d]&C_S\ar[l]\ar[d]\\
<C_G,C_Q>\ar[d]&C_G\ar[l]\ar[d]&C_G\cap C_R\ar[d]\ar[l]\\
C_P&<C_G,C_R>\ar[l]&C_R\ar[l]}$$
Also, $C_G$ slices the square, in the sense that this is an intersection and generation diagram, precisely when $C_P=<C_G,C_Q>\cap<C_G,C_R>$ and $C_S=<C_G\cap C_Q,C_G\cap C_R>$.
\end{proposition}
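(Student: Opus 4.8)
The plan is to read Proposition 2.4 directly off Proposition 2.3, by applying the contravariant Tannakian duality $X\mapsto C_X$ of Proposition 1.3 (which reverses every inclusion, since $G\subset H$ is equivalent to $C_H\subset C_G$) together with the translation rules of Proposition 1.4, namely $C_{G\cap H}=<C_G,C_H>$ and $C_{<G,H>}=C_G\cap C_H$. First I would assemble the grid. Applying $X\mapsto C_X$ to the nine vertices of the diagram in Proposition 2.3 and reversing every arrow sends the central vertex $G$ to $C_G$, the outer pair $P,S$ to $C_P,C_S$, and the four composite vertices $<G,Q>,<G,R>,G\cap Q,G\cap R$ to $C_G\cap C_Q,\ C_G\cap C_R,\ <C_G,C_Q>,\ <C_G,C_R>$ after Proposition 1.4. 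Placing these in the positions forced by the reversed arrows produces exactly the stated grid, with $C_S$ smallest in the top right, $C_P$ largest in the bottom left, and all arrows running from $C_S$ toward $C_P$.

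For the slicing criterion I would dualize, one at a time, the two scalar conditions already isolated in Proposition 2.3. Starting from $G=<G,Q>\cap<G,R>$ and applying $C_{(-)}$, Proposition 1.4 converts the outer intersection into a generation and the two inner generations into intersections, rewriting the condition entirely in terms of $C_G,C_Q,C_R$; the slicing requirement thereby takes the form of the generation condition $C_S=<C_G\cap C_Q,\ C_G\cap C_R>$. Dually, starting from $G=<G\cap Q,G\cap R>$ and applying $C_{(-)}$, Proposition 1.4 turns the outer generation into an intersection and the two inner intersections into generations, and the requirement takes the form of the intersection condition $C_P=<C_G,C_Q>\cap<C_G,C_R>$. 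Since Proposition 2.3 certifies that these two conditions are exactly the non-tautological requirements for the grid to be an intersection and generation diagram, and since the duality is a bijection on such conditions, the two displayed equalities are precisely the non-tautological requirements for the dual grid, which is the assertion.

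The routine part is confirming, exactly as in the proof of Proposition 2.3, that the two diagonal sub-squares (top left and bottom right) are intersection and generation diagrams for free — because there the relevant corner is literally $C_G\cap C_Q$ sitting opposite $<C_G,C_Q>$, and likewise with $R$ — and that the remaining halves of the conditions on the other two sub-squares are forced by the standing hypothesis that $P\subset Q,R\subset S$ is already an intersection and generation diagram, i.e.\ by the dual identities $C_S=C_Q\cap C_R$ and $C_P=<C_Q,C_R>$ together with the chain $C_S\subset C_G\subset C_P$. The one genuinely delicate point, and the step I would guard against most carefully, is the bookkeeping of the inclusion reversal: under the contravariant functor a ``smallest corner equals the intersection of its neighbors'' condition upstairs must become a ``largest corner equals the generation of its neighbors'' condition downstairs, so that it is exactly in attaching the two extremal categories $C_P$ and $C_S$ to the correct operation that an error is easiest to make.
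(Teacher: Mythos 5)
Your overall route is exactly the paper's: its entire proof is the observation that the grid is the Tannakian image of the one in Proposition 2.3 under the conversion formulae of Proposition 1.4, and your assembly of the nine vertices, with every inclusion reversed and $C_{G\cap Q}=<C_G,C_Q>$, $C_{<G,Q>}=C_G\cap C_Q$, is correct. But your derivation of the slicing criterion breaks down at precisely the step you yourself flagged as the delicate one. Applying $C_{(-)}$ to $G=<G,Q>\cap<G,R>$ gives, by Proposition 1.4, $C_G=C_{<G,Q>\cap<G,R>}=<C_{<G,Q>},C_{<G,R>}>=<C_G\cap C_Q,C_G\cap C_R>$: the left-hand side is the \emph{central} category $C_G$, not the extremal $C_S$. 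Likewise $G=<G\cap Q,G\cap R>$ dualizes to $C_G=<C_G,C_Q>\cap<C_G,C_R>$, not to $C_P=<C_G,C_Q>\cap<C_G,C_R>$. Your phrase ``the slicing requirement thereby takes the form $C_S=<C_G\cap C_Q,C_G\cap C_R>$'' is asserted, not derived, and no ``bijection on conditions'' moves $C_G$ to a corner of the grid. One can also verify directly from the grid that the two non-tautological requirements are the $C_G$-centered ones: for the small square with extremes $C_S\subset C_G$, the intersection half $(C_G\cap C_Q)\cap(C_G\cap C_R)=C_G\cap C_S=C_S$ is automatic, and the non-automatic generation half is $<C_G\cap C_Q,C_G\cap C_R>=C_G$, with $C_G$ on the right.

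The substitution is not harmless, because the printed equalities are strictly stronger than slicing. Take $G=Q$: then both conditions of Proposition 2.3 hold, since $<G\cap Q,G\cap R>=<Q,P>=Q=G$ and $<G,Q>\cap<G,R>=Q\cap S=Q=G$, so $G$ slices the square; yet $<C_G\cap C_Q,C_G\cap C_R>=<C_Q,C_S>=C_Q\neq C_S$ whenever $Q\neq S$. Translated back, the printed conditions say $P=<G\cap Q,G\cap R>$ and $S=<G,Q>\cap<G,R>$, which force degeneracies rather than characterize slicing. In other words, the statement as printed contains a slip --- $C_P$ and $C_S$ should both read $C_G$ --- and the paper's own one-line proof (direct translation of Proposition 2.3 via Proposition 1.4) supports the $C_G$-centered version. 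A sound blind proof would have derived the correct equalities and flagged the mismatch with the statement; instead your proposal manufactures a justification for the misprinted form, which is a genuine gap, even though everything else (the grid, the two automatic diagonal sub-squares, and the automatic halves of the other two, which follow from $C_S=C_Q\cap C_R$, $C_P=<C_Q,C_R>$ and $C_S\subset C_G\subset C_P$) is fine.
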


\begin{proof}
This is indeed clear from definitions, by proceeding as in the proof of Proposition 2.3. Observe that this is the diagram of Tannakian categories for the quantum groups in Proposition 2.3, due to the conversion formulae from Proposition 1.4 above.
\end{proof}

Now back to the cube, given an intermediate subgroup $H_N\subset G\subset U_N^+$ we can perform the construction in Proposition 2.3, for each of the 6 faces of the cube. The problem, however, is that we will not obtain a diagram of inclusions in this way, because for each of the 12 edges of the cube, the corresponding ``midpoint'' will be defined twice. 

As a first observation, 6 of these midpoints are actually well-defined, and there is no problem with them, because we have some compatibility formulae, as follows:

\begin{proposition}
We have the following results:
\begin{enumerate}
\item $(G_{class})_{disc}=(G_{disc})_{class}=G\cap K_N$. 

\item $(G_{class})_{real}=(G_{real})_{class}=G\cap O_N$.

\item $(G_{disc})_{real}=(G_{real})_{disc}=G\cap H_N^+$.

\item $(G_{free})_{cont}=(G_{cont})_{free}=<G,O_N^+>$.

\item $(G_{free})_{unit}=(G_{unit})_{free}=<G,K_N^+>$.

\item $(G_{cont})_{unit}=(G_{unit})_{cont}=<G,U_N>$.
\end{enumerate}
\end{proposition}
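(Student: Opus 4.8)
The plan is to exploit the fact that all six versions split into two families: the three operations $G_{class}=G\cap U_N$, $G_{disc}=G\cap K_N^+$, $G_{real}=G\cap O_N^+$ are intersections, while the three operations $G_{free}=<G,H_N^+>$, $G_{cont}=<G,O_N>$, $G_{unit}=<G,K_N>$ are generations. Since each of the six identities (1)--(6) pairs two operations drawn from a single family, the two orders of composition will agree automatically, once we know that $\cap$ and $<\,,>$ are each associative and commutative.

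First I would establish exactly this associativity and commutativity. By Proposition 1.4 the operation $\cap$ corresponds, under the Tannakian bijection of Proposition 1.3, to generation of categories, $C_{G\cap H}=<C_G,C_H>$, and dually $<\,,>$ corresponds to intersection of categories, $C_{<G,H>}=C_G\cap C_H$. Both category operations are manifestly associative and commutative, so transporting back through the bijection, so are $\cap$ and $<\,,>$ on the closed subgroups of $U_N^+$. In particular, for any $A,B$ we obtain $(G\cap A)\cap B=G\cap(A\cap B)=(G\cap B)\cap A$, and likewise $<<G,A>,B>=<G,<A,B>>=<<G,B>,A>$.

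Next, for the three intersection identities I would rewrite each double operation in the common form $G\cap(A\cap B)$ and evaluate the inner intersection inside the cube, using Theorem 1.5. Concretely, $U_N\cap K_N^+=K_N$ (the back face), $U_N\cap O_N^+=O_N$ (the right face), and $K_N^+\cap O_N^+=H_N^+$ (the top face), these being precisely the relations $P=Q\cap R$ of the corresponding square subdiagrams. This yields (1) $(G_{class})_{disc}=(G_{disc})_{class}=G\cap K_N$, (2) $(G_{class})_{real}=(G_{real})_{class}=G\cap O_N$, and (3) $(G_{disc})_{real}=(G_{real})_{disc}=G\cap H_N^+$. Symmetrically, for the three generation identities I would rewrite each as $<G,<A,B>>$ and use the relations $<Q,R>=S$ of the cube: $<H_N^+,O_N>=O_N^+$ (front face), $<H_N^+,K_N>=K_N^+$ (left face), and $<O_N,K_N>=U_N$ (bottom face), giving (4), (5), (6) respectively.

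There is no genuine obstacle here beyond the bookkeeping; the single point that requires care is the associativity and commutativity step, since $\cap$ and $<\,,>$ are defined only indirectly, through Tannakian duality, so these properties must be read off on the category side rather than assumed. Once that is in place, each identity collapses to a single edge-relation of the cube, all of which are already furnished by Theorem 1.5.
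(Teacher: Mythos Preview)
Your proposal is correct and follows essentially the same route as the paper: both reduce (1)--(3) to the template $(G\cap Q)\cap R=(G\cap R)\cap Q=G\cap P$ and (4)--(6) to $<<G,Q>,R>=<<G,R>,Q>=<G,S>$, then invoke the intersection/generation relations of the cube from Theorem~1.5. The only difference is that you spell out, via Proposition~1.4 and the Tannakian correspondence, why $\cap$ and $<\,,>$ are associative and commutative, whereas the paper takes this for granted.
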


\begin{proof}
The formulae (1,2,3) in the statement are all of the following type:
$$(G\cap Q)\cap R=(G\cap R)\cap Q=G\cap P$$

As for the formulae (4,5,6), these are all of the following type:
$$<<G,Q>,R>=<<G,R>,Q>=<G,S>$$

Thus the value of $G$ is in fact irrelevant, and the results simply follow from the fact that the cube is an intersection and generation diagram.
\end{proof}

Regarding now the remaining 6 edges, the compatibility conditions here for the midpoints are not automatic, and we have to introduce the following notion:

\begin{definition}
We say that $G$ pre-slices the cube if it satisfies the following conditions:
\begin{enumerate}
\item $(G_{class})_{cont}=(G_{cont})_{class}$.

\item $(G_{class})_{unit}=(G_{unit})_{class}$.

\item $(G_{disc})_{free}=(G_{free})_{disc}$.

\item $(G_{disc})_{unit}=(G_{unit})_{disc}$.

\item $(G_{real})_{free}=(G_{free})_{real}$. 

\item $(G_{real})_{cont}=(G_{cont})_{real}$.
\end{enumerate}
\end{definition}

In other words, we are asking here for each intersection operation in Definition 2.1 to commute with the generation operations, except for the ``opposite'' operation. 

We can now formulate our first slicing result, as follows:

\begin{proposition}
Assuming that $G$ pre-slices the cube in the above sense, the diagram in Proposition 2.2 can be completed, via the construction in Proposition 2.3, into a diagram fully slicing the cube along the $3$ coordinates axes, into $8$ small cubes.
\end{proposition}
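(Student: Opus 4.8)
The plan is to build the $27$ vertices of the sliced cube one dimension at a time: the $8$ corners are the basic quantum groups, the body centre is $G$ itself, the $6$ face centres are the versions $G_x$ of Definition 2.1, already positioned by Proposition 2.2, and the only data still missing are the $12$ edge midpoints. I would produce these by applying Proposition 2.3 to each of the $6$ faces in turn. Concretely, each face of the cube is a square $P\subset Q,R\subset S$ which is an intersection and generation diagram by Theorem 1.5, and the associated face centre $G_x$ is an intermediate quantum group $P\subset G_x\subset S$ by Proposition 2.2; thus Proposition 2.3 applies verbatim and refines that face into a $3\times 3$ grid, producing $4$ edge midpoints per face, of the form $G_x\cap Q$, $G_x\cap R$, $<G_x,Q>$, $<G_x,R>$.

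The key step is then the compatibility check. Since each of the $12$ edges lies on exactly two faces, its midpoint gets defined twice, once from each adjacent face, and the six face-slicings glue into a genuine three-dimensional diagram precisely when these two definitions agree along every edge. I would organise the edges according to the types of their two adjacent faces: call a face an $\cap$-face or a $<\,,>$-face according to whether its centre $G_x$ arises by intersection (class, disc, real) or by generation (free, cont, unit). Every edge is then of one of three kinds — both adjacent faces of type $\cap$, both of type $<\,,>$, or one of each. A direct enumeration shows that exactly $3+3=6$ edges are of the first two kinds, and for these the two definitions coincide automatically by the six identities of Proposition 2.6, whereas the remaining $6$ edges are of mixed type, and for these the coincidence of the two definitions is exactly one of the six equalities imposed in Definition 2.7.

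Once all $12$ midpoints are unambiguously defined, the proof finishes formally. Every inclusion occurring in the $8$ small cubes connects two vertices differing in a single coordinate, hence is a single application of $\cap$ or $<\,,>$, and the operations are monotone — intersecting makes a quantum group smaller while generating makes it larger — so the diagram of Proposition 2.2 extends to a consistent diagram of inclusions, slicing the cube along all three coordinate axes into $8$ small cubes.

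I expect the main obstacle to be bookkeeping rather than conceptual: one must match, edge by edge, the midpoint $G_x\cap Q$ (respectively $<G_x,Q>$) produced by Proposition 2.3 on one face with the double version $(G_y)_z$ living on the same edge, rewriting nested operations via the cube relations of Theorem 1.5 (for instance $G_{class}\cap O_N=G\cap O_N=(G_{real})_{class}$), and then confirming that the $6$ mixed edges are in bijection with the $6$ conditions of Definition 2.7. The conceptual point worth highlighting is that the three ``opposite'' intersection--generation pairs, namely class/free, disc/cont and real/unit, never meet along a common edge, since they move in opposite senses along the same axis; this is exactly why pre-slicing imposes only $6=3\times 3-3$ commutation conditions, and why this is precisely the amount needed to complete the diagram.
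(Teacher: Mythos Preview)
Your argument is correct and follows essentially the same approach as the paper: reduce the construction to checking that the $12$ edge midpoints are well-defined, split the edges into $6$ automatic ones handled by Proposition 2.5 and $6$ non-automatic ones handled by Definition 2.6. Your classification by $\cap$-face versus $<\,,>$-face is exactly equivalent to the paper's classification by ``edges emanating from $H_N$'' versus ``edges landing into $U_N^+$'', since the three $\cap$-faces (class, disc, real) are precisely the three faces meeting at $H_N$ and the three $<\,,>$-faces (free, cont, unit) are the three faces meeting at $U_N^+$; your presentation just makes the underlying symmetry a bit more explicit.
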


\begin{proof}
As already mentioned, checking that the conclusion holds is a matter of checking that the 12 projections on the edges are well-defined. And the situation is as follows:

(1) Regarding the $3$ edges emanating from $H_N$, and the $3$ edges landing into $U_N^+$, the result here follows from the formulae in Proposition 2.5 above.

(2) Regarding the remaining $6$ edges, not emanating from $H_N$ or landing into $U_N^+$, here the result follows from the formulae in Definition 2.6 above.
\end{proof}

We are not done yet with the slicing work, because nothing guarantees that our slicing is ``neat'', in the sense that we obtain in this way an intersection and generation diagram. In order to have this property, we have to introduce one more definition, as follows:

\begin{definition}
We say that $G$ slices the cube when
\begin{enumerate}
\item $G$ slices the cube, in the sense of Definition 2.6 above,

\item $G_{class},G,G_{free}$ slice the classical/intermediate/free faces,

\item $G_{disc},G,G_{cont}$ slice the discrete/intermediate/continuous faces,

\item $G_{real},G,G_{unit}$ slice the real/intermediate/unitary faces,
\end{enumerate}
where by ``intermediate'' we mean in each case ``parallel to its neighbors''.
\end{definition}

In short, we are asking here for a total of $6\times4=24$ conditions to be satisfied, namely the $6$ slicing conditions from Definition 2.6, and then the $6\times 3=18$ conditions coming from the 2 conditions in Proposition 2.3, applied to the $3\times3$ faces of the slicing.

We can now finish the slicing procedure, as follows:

\begin{theorem}
Assuming that $G$ slices the cube in the above sense, we have a diagram fully slicing the cube, into $8$ small cubes, which is an intersection diagram, in the sense that each of its $36$ small square faces is an intersection and generation diagram.
\end{theorem}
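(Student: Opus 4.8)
The plan is to reduce the assertion to $36$ applications of Proposition 2.3, bookkept by the $9$ faces that actually get sliced. First I would invoke Proposition 2.8, whose hypothesis is exactly condition (1) of Definition 2.9, i.e.\ the pre-slicing relations of Definition 2.6: this guarantees that the whole $3\times3\times3$ array of quantum groups is well-defined, so that each of the $12$ edge midpoints of the cube receives an unambiguous value (via Proposition 2.5 for the $6$ edges at $H_N$ and $U_N^+$, and via Definition 2.6 for the other $6$). Only once this consistency is in place is there an honest diagram whose small faces can be examined.

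Next I would organize the $36$ small faces. Each is perpendicular to exactly one coordinate axis of the cube, so they split as $3\times12$; and the $12$ faces perpendicular to a fixed axis are precisely the $4+4+4$ squares obtained by slicing the three big faces parallel to that axis, namely the two outer faces and the central cross-section through $G$. These three big faces, together with the projection slicing each of them, are exactly the data in the corresponding line (2), (3) or (4) of Definition 2.9. Thus the $36$ small faces are partitioned into $9$ groups of $4$, one group per sliced big face, and it suffices to treat one big face at a time.

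For a single sliced big face $P\subset Q,R\subset S$ with slicing element $G_x$, I would apply Proposition 2.3 and examine its four small squares. The upper-left and lower-right squares are intersection and generation diagrams automatically, their conditions collapsing to tautologies like $G_x\cap Q=G_x\cap Q$. For the lower-left and upper-right squares I would show that one half of each is also automatic: the intersection condition $P=(G_x\cap Q)\cap(G_x\cap R)=G_x\cap(Q\cap R)=G_x\cap P=P$ and the generation condition $\langle\langle G_x,Q\rangle,\langle G_x,R\rangle\rangle=\langle G_x,Q,R\rangle=\langle G_x,S\rangle=S$, each using only that the big face is an intersection and generation diagram and that $P\subset G_x\subset S$. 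What is left are precisely the two slicing conditions $G_x=\langle G_x\cap Q,G_x\cap R\rangle$ and $G_x=\langle G_x,Q\rangle\cap\langle G_x,R\rangle$, assumed in Definition 2.9. Summing over the $9$ faces, the $18$ automatic and the $18$ conditional squares exhaust all $36$.

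The step I expect to be the real work is the three central cross-sections, rather than the six outer faces. For the outer faces one quotes Theorem 1.5 to know the big face is an intersection and generation diagram. For a central face --- say the one at the middle of the liberation axis, with corners $G\cap H_N^+$, $(G_{disc})_{unit}$, $(G_{real})_{cont}$, $\langle G,U_N\rangle$ and slicing element $G$ --- the vertices are themselves projections of $G$, so I must check by hand that the subdivision produced by Proposition 2.3 is consistent with the globally assigned grid values and that its automatic conditions survive. Here the decisive little computations are collapses such as $G\cap(G_{disc})_{unit}=G_{disc}$ and $\langle G,(G_{disc})_{unit}\rangle=G_{unit}$, together with $G_{disc}\cap G_{real}=G\cap H_N^+$ and $\langle G_{unit},G_{cont}\rangle=\langle G,U_N\rangle$, the last two descending from the cube identities $K_N^+\cap O_N^+=H_N^+$ and $\langle K_N,O_N\rangle=U_N$. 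Once these are verified the central faces behave exactly like the outer ones, the two remaining conditions per central face being the slicing conditions of Definition 2.9, and the proof closes.
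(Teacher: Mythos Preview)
Your argument is correct and follows the paper's route --- the first assertion via the pre-slicing proposition, the second via Proposition 2.3 applied to each of the nine $3\times3$ faces under the slicing hypotheses --- though your numbering is shifted by one (your Proposition 2.8 and Definition 2.9 are the paper's Proposition 2.7 and Definition 2.8). You are in fact more careful than the paper's two-line proof: you notice that for the three central cross-sections Theorem 1.5 does not supply the big-face intersection/generation hypothesis of Proposition 2.3, and you verify the needed collapses (e.g.\ $G\cap(G_{disc})_{unit}=G_{disc}$ and $G_{disc}\cap G_{real}=G\cap H_N^+$) directly, which the paper leaves implicit.
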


\begin{proof}
Here the first assertion follows from Proposition 2.7 above, and the second assertion follows from Proposition 2.3 above, via our assumptions from Definition 2.8.
\end{proof}

Summarizing, we know now how to slice the cube, with the remark that the $36\times2=72$ intersection and generation properties that are a priori needed, for the various small faces, collapse in fact to the $6\times4=24$ conditions from Definition 2.8 above.

Of course, this $72\to24$ simplification is something trivial, simply coming from the fact that the cube is an intersection and generation diagram, and not involving $G$ itself. It is probably possible to do here slightly better, by carefully examining the situation. 

However, when going on this way, at some point there will be certainly need for a deep result. So, the problem of fully simplifying our axioms is open, and interesting.

\section{Classification}

All the above is quite natural, and looking for quantum groups having the slicing property is an interesting question. However, our purpose here is a bit different. We are interested in formulating a foundational result, rather than something technical.

In order to do so, we must introduce one more concept, as follows:

\begin{definition}
A family of compact quantum groups $G=(G_N)$, with $G_N\subset U_N^+$ for any $N\in\mathbb N$, is called uniform when the following conditions are satisfied,
$$G_{N-1}=G_N\cap U_{N-1}^+$$
with respect to the standard embeddings $U_{N-1}^+\subset U_N^+$, given by $u=diag(1,v)$.
\end{definition}

This condition is something very natural, algebrically speaking, because we are here in an injective/projective limit situation for the associated compact and discrete quantum groups. At a more advanced level, this condition appeared in \cite{bsp}, in connection with the Bercovici-Pata bijection \cite{bpa} for the asymptotic laws of truncated characters, and also in \cite{ba1}, \cite{bss}, in connection with various noncommutative geometry questions. See \cite{ba3}.

We can now prove the statement announced in the introduction, namely:

\begin{theorem}
Assume that $H_N\subset G\subset U_N^+$ has the following properties:
\begin{enumerate}
\item Easiness.

\item Uniformity.

\item Slicing property.
\end{enumerate}
Then $G$ must be one of the basic $8$ quantum groups.
\end{theorem}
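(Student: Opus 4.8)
The plan is to pass to categories of partitions, use uniformity to reduce to a classified family of candidates, and then let the slicing identities force $D$ to be one of the eight Boolean combinations of the three basic features. I would start from easiness: by Theorem 1.5 and the hypothesis $H_N\subset G\subset U_N^+$, the category of $G$ is $C_G=span(D)$ for a category of partitions $D$ with $\mathcal{NC}_2\subset D\subset P_{even}$, the inclusions of quantum groups corresponding to reverse inclusions of categories. The conversion formulae of Proposition 1.4 then turn the six operations of Definition 2.1 into category operations, each projection remaining easy:
$$D_{class}=<D,\mathcal P_2>\quad,\quad D_{disc}=<D,\mathcal{NC}_{even}>\quad,\quad D_{real}=<D,NC_2>$$
$$D_{free}=D\cap NC_{even}\quad,\quad D_{cont}=D\cap P_2\quad,\quad D_{unit}=D\cap\mathcal P_{even}$$
so that the three intersection-type versions cut $D$ down to its noncrossing, its pairing and its matching part, while the three generation-type versions enlarge it in the complementary directions.

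Next I would bring in uniformity (Definition 3.1). A uniform easy category $D$ with $\mathcal{NC}_2\subset D\subset P_{even}$ belongs to a classified family: besides the eight basic categories one finds the half-liberated categories, and the reflection-type series $\mathbb Z_s\wr S_N$ interpolating between the $\mathbb Z_2$-corners and the $\mathbb T$-corners of the cube, all of which are uniform, easy and in range. I would invoke the classification from \cite{bsp} (together with \cite{bb+}, \cite{bbc}) to fix this candidate list, the upshot being that the only freedom left in $D$ is, along each of the three axes, whether it sits at a corner or lies ``diagonally'' across it.

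Finally I would exploit the slicing property. By Theorem 2.9 it is equivalent to the $24$ conditions of Definition 2.8, which through Proposition 1.4 become, for each relevant face $P\subset Q,R\subset S$, the two category-level reconstruction identities
$$D=<D,Q>\cap<D,R>\quad,\quad D=<D\cap Q,D\cap R>$$
where $Q,R$ now denote the corresponding basic categories. These say that each of the three features---noncrossing versus crossing, pairing versus even block, matching versus non-matching---takes a \emph{pure} value on $D$: the two corner-projections, recombined, must recover $D$ exactly, which is impossible for a genuinely diagonal $D$. Purity along all three axes then forces $D$ to equal $P_{even}$ intersected with the chosen subset of features, that is, one of the $2^3=8$ basic categories, whence $G=G_D$ is the corresponding basic quantum group.

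The main obstacle is precisely this last pruning step: showing that every non-basic candidate violates at least one reconstruction identity. For the half-liberated categories the projections to the cube corners lose the defining half-commutation data, so that $<D\cap Q,D\cap R>$ overshoots $D$; for the series $\mathbb Z_s\wr S_N$ with $2<s<\infty$ the real and unitary projections collapse $\mathbb Z_s$ onto $\mathbb Z_2$ or $\mathbb T$, and the reconstruction fails again. Verifying this case by case across the whole uniform family---rather than through one clean identity---is where the real content of the theorem lies; the easiness and uniformity reductions preceding it are essentially formal.
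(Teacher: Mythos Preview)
Your reduction to categories of partitions and the formulae for the six projections are correct, but the core step of the argument rests on a classification you do not actually have. You write that ``a uniform easy category $D$ with $\mathcal{NC}_2\subset D\subset P_{even}$ belongs to a classified family'' and then invoke \cite{bsp}, \cite{bb+}, \cite{bbc}. None of these papers contains such a result: \cite{bsp} treats the orthogonal easy case, \cite{bb+} is about free Bessel laws, and \cite{bbc} introduces $H_N^+$. A global classification of the easy (let alone uniform easy) quantum groups in the full range $H_N\subset G\subset U_N^+$ was not available; the references \cite{mw1}, \cite{mw2}, \cite{rwe} cited in Section~4 are precisely about the work still needed on individual faces. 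So your ``candidate list'' is not established, and the subsequent case-by-case pruning has nothing firm to act on. There is also a factual slip in the list you do propose: the half-liberated quantum group $O_N^*$ is \emph{not} uniform, since $abc=cba$ with $c=1$ gives $ab=ba$ and collapses $O_{N-1}^*$ to $O_{N-1}$; so half-liberations should not survive the uniformity filter at all.

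The paper avoids this missing global classification by reversing the order of your two reductions: it uses the slicing property \emph{first}, to project $G$ onto loci where classifications \emph{are} known, and only then applies uniformity. Concretely, it looks at $G_{class}$ on the lower face $H_N\subset\cdot\subset U_N$, where \cite{twe} gives the full easy classification; imposing uniformity leaves the series $H_N^s$, imposing slicing leaves the series $H_{N,L},O_{N,L}$, and imposing both leaves only the four corners. It then looks at the projection onto the edge $O_N\subset\cdot\subset O_N^+$, where \cite{bve} gives only $O_N,O_N^*,O_N^+$, and uniformity kills $O_N^*$. These two coordinates pin $G$ to one of the eight vertices. The point is that slicing is not merely a final check on a pre-existing list; it is the mechanism that lets one work entirely inside the classical face and the orthogonal edge, where the combinatorics is already done.
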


\begin{proof}
The idea will be that of ``locating'' our quantum group inside the cube, in a 3D sense, by using the slicing property. There are many ways in doing so, and in view of the known classification results, whose technical level can vary a lot, the best is by using the results for the classical face from \cite{twe}, and the results for the orthogonal edge from \cite{bve}. In other words, we would like to use the ``coordinate system'' highlighted below:
$$\xymatrix@R=18pt@C=18pt{
&K_N^+\ar[rr]&&U_N^+\\
H_N^+\ar[rr]\ar[ur]&&O_N^+\ar[ur]\\
&K_N\ar@=[rr]\ar[uu]&&U_N\ar[uu]\\
H_N\ar[uu]\ar@=[ur]\ar@=[rr]&&O_N\ar@=[uu]\ar@=[ur]
}$$

Let us start with the classical face. Our goal here is that of finding the possible values of $G_{class}$, which belong by definition to this face. In order to simplify the discussion, we will temporarily assume $G=G_{class}$. Thus, we would like to find the intermediate quantum groups $H_N\subset G\subset U_N$ which are easy, uniform, and which slice the lower face.

According to Proposition 2.3, the slicing diagram for the lower face is as follows:
$$\xymatrix@R=30pt@C=30pt{
K_N\ar[r]&G_{unit}\ar[r]&U_N\\
G_{disc}\ar[u]\ar[r]&G\ar[r]\ar[u]&G_{cont}\ar[u]\\
H_N\ar[r]\ar[u]&G_{real}\ar[u]\ar[r]&O_N\ar[u]}$$

With these preliminaries in hand, we can now survey the known results on the subject. There are several statements here, all based on \cite{twe}, as follows:

\bigskip

\underline{Classical face, easy case.} The full classification of the intermediate easy quantum groups $H_N\subset G\subset U_N$ is available from \cite{twe}. There are many examples here, with the whole subject being quite technical, and we refer to \cite{twe} for the full details.

\bigskip

\underline{Classical face, easy uniform case.} As explained in \cite{ba1}, in the context of the noncommutative homogeneous space considerations there, which require the uniformity axiom, imposing this axiom leads to some simplifications, the solutions being as follows:
$$\xymatrix@R=23pt@C=35pt{
K_N\ar[rr]&&U_N\\
H_N^s\ar[u]&&\ar[u]\\
H_N\ar[rr]\ar[u]&&O_N\ar[uu]}$$ 

Here the extra groups on the left are the complex reflection groups $H_N^s=\mathbb Z_s\wr S_N$ with $s\in\{2,4,6,\ldots,\infty\}$ from \cite{bb+}, which at $s=2,\infty$ cover $H_N,K_N$. See \cite{ba1}. 

\bigskip

\underline{Classical face, easy slicing case.} As explained in \cite{ba3}, when imposing the slicing condition on the lower face, which comes from the general noncommutative geometry considerations in \cite{ba2}, \cite{bbi}, some simplifications appear as well, the solutions being as follows:
$$\xymatrix@R=25pt@C=30pt{
K_N\ar[rr]&&U_N\\
H_{N,L}\ar[u]\ar[rr]&&O_{N,L}\ar[u]\\
H_N\ar[rr]\ar[u]&&O_N\ar[u]}$$ 

Here the various extra groups are obtained by ``arithmetic complexification'', according to the formula $G_L=\mathbb Z_LG$, with $L\in\{2,3,\ldots,\infty\}$. See \cite{ba2}, \cite{ba3}, \cite{bbi}.

\bigskip

\underline{Classical face, easy uniform slicing case.} In order to obtain the solutions here, we just have to intersect the above two diagrams, and we obtain as solutions:
$$\xymatrix@R=25pt@C=30pt{
K_N\ar[rr]&&U_N\\
\\
H_N\ar[rr]\ar[uu]&&O_N\ar[uu]}$$ 

In short, getting back now to our original problem, we have reached to the conclusion that $G_{class}$ must be one of the 4 vertices of the lower face of the cube.

\bigskip

With this result in hand, we can now go ahead, and finish by using \cite{bve}. Indeed, the projection of $G$ on the real continuous edge, $O_N\subset O_N^+$, must be an intermediate easy quantum group $O_N\subset G\subset O_N^+$. But, according to \cite{bve}, the only non-trivial solution here is the half-classical orthogonal group $O_N^*$, coming from the half-commutation relations $abc=cba$. And this quantum group being not uniform, simply because $abc=cba$ with $c=1$ imply $ab=ba$, as explained in \cite{bss}, we have only $O_N,O_N^+$ as solutions. 

\bigskip

Summarizing, our intermediate quantum group $H_N\subset G\subset U_N^+$ must lie on the upper or the lower face of the cube, and its projection on the lower face must be one of the 4 vertices of the lower face. Thus $G$ must be one of the 8 vertices of the cube, as claimed.
\end{proof}

\section{Conclusion}

We have seen that by ``piling up'' a number of axioms, which are natural in the noncommutative geometry and free probability context, and which actually came from a substantial amount of work in this direction, we are left with 8 quantum groups. 

All this is of course quite philosophical. What we have here is rather some kind of ``ground zero'' result, proving a foundational framework for more specialized classification results, which can be obtained by carefully modifying of removing the axioms.

Here is a brief discussion, regarding the modification/removal of these axioms:

\begin{problem}
Modifying or removing the slicing axiom.
\end{problem}

Generally speaking, totally removing the slicing axiom leads into some difficult questions, with the problem coming from a lack of 3D orientation inside the cube.

One fruitful direction, however, comes by restricting the attention to the 6 faces of the cube, and trying to find the uniform easy quantum groups which slice the face.

Skipping the details here, let us mention that the problem is solved by \cite{twe} for the upper and lower faces, is elementary as well for the front and right face, using the results from \cite{bve}, \cite{twe}, and is still in need of some non-trivial combinatorial work, based on the results in \cite{mw1}, \cite{mw2}, \cite{rwe}, in what regards the left face and the bottom face.

\begin{problem}
Modifying or removing the uniformity axiom.
\end{problem}

This is another interesting direction. The general strategy from the proof of Theorem 3.2 above can be followed, with the only piece of work still needed being that of unifying the constructions on $H_{N,L},O_{N,L}$ with the construction of the half-liberations.

\begin{problem}
Modifying or removing the easiness axiom.
\end{problem}

This is something heavier. In principle the general strategy from the proof of Theorem 3.2 above can be followed too, the work in the classical case being probably something quite standard, and with conjectural input for the orthogonal edge coming from \cite{bbs}.

There are as well several modifications of the easy quantum group theory which can be used, the most standard ones, at least for now, coming from the work in \cite{bsk}, \cite{fr1}, \cite{fr2}.

\begin{problem}
Modifying or removing the $H_N\subset G$ axiom.
\end{problem}

Once again, this something heavier. A natural direction, which would require however rethinking the slicing procedure, is that of using the condition $S_N\subset G$. Note that this would require as well in dealing with all the ``singleton issues'' which might appear.

Alond the same lines, using some even weaker conditions, of type $A_N\subset G$, make sense as well, at least theoretically. For some comments here, we refer to \cite{ba4}.

\begin{problem}
Modifying everything.
\end{problem}

This is something philosophical. Let us remember that the extra axioms in Theorem 3.2 were obtained by ``putting at work'' the compact quantum groups, in connection with various questions in noncommutative geometry and free probability. There are of course many other potential applications of the compact quantum groups, and putting them at work on other topics, with some axiomatics in mind, could perfectly lead, in the long run, to a different philosophy, different axioms, and a different ``ground zero'' result.

Having such an alternative work done would be of course immensely useful.

\end{document}